\documentclass[11pt]{amsart}

\usepackage[T1]{fontenc}
\usepackage{lmodern}
\usepackage{amsmath,amssymb,amsthm,mathtools}
\usepackage{microtype}
\usepackage[hidelinks]{hyperref}

\newtheorem{theorem}{Theorem}[section]
\newtheorem{proposition}[theorem]{Proposition}

\newtheorem{corollary}[theorem]{Corollary}

\theoremstyle{definition}
\newtheorem{definition}[theorem]{Definition}
\newtheorem{question}[theorem]{Question}

\theoremstyle{remark}
\newtheorem{remark}[theorem]{Remark}

\newcommand{\br}{\operatorname{Br}}
\newcommand{\Tlink}{T}

\title[Infinite Families of Generalised T-Link Presentations]{Every Link Has Infinitely Many Explicit Generalised T-Link Presentations}

\author{Thiago de Paiva}
\address[]{Beijing International Center for Mathematical Research, Peking University, Beijing 100871, China P.R.}
\email[]{thhiagodepaiva@gmail.com}
\author{Yi Liu}
\address{Beijing International Center for Mathematical Research, Peking University, Beijing 100871, China P.R.}
\email{liuyi@bicmr.pku.edu.cn}

\begin{document}

\begin{abstract}
Generalised $T$-links provide a simple description of all links in
$S^3$ as closures of products of standard twisting blocks,
parametrised by finite sequences of integers. We prove that every
link admits infinitely many pairwise distinct generalised $T$-link
presentations. Starting from any such presentation, we give explicit
parameter transformations that preserve the represented link and
generate families of pairwise distinct presentations depending on
arbitrarily many independent integer parameters.
\end{abstract}

\maketitle

\section{Introduction}
\label{sec:introduction}

Birman and Kofman introduced $T$-links as closures of products of
positive twisting blocks supported on increasing numbers of strands
\cite{BirmanKofman2009}. Generalised $T$-links extend this construction
by allowing the final twisting exponent to be negative and including
an offset that records additional split unknotted components.
De Paiva, Hui and Rodr\'{\i}guez Migueles proved that every link in
$S^3$ admits a generalised $T$-link presentation
\cite{dePaivaHuiRodriguez2024}. This gives a universal parametrisation
of links by finite strings of integers subject to explicit ordering
and sign conditions.

The universality theorem leads naturally to the study of equivalence
between parameter strings: how can different parameters describe the
same ambient isotopy class of links? In this paper, we construct
explicit families of such equivalences. Starting from any generalised
$T$-link presentation, we produce infinitely many pairwise distinct
presentations of the same link.  
To state the result, write
\[
   \delta_{m,d}=\sigma_{d+1}\cdots\sigma_{d+m-1},
   \qquad
   \delta_m=\delta_{m,0},
\]
for integers $m\geq2$ and $d\geq0$, where the $\sigma_i$ are the
standard braid generators. A generalised $T$-link presentation is a
parameter string
\[
   \mathcal P=((r_1,s_1),\ldots,(r_\ell,s_\ell),d)
\]
representing the closure of the braid
\[
   \delta_{r_1,d}^{s_1}\cdots\delta_{r_\ell,d}^{s_\ell}
\]
on $d+r_\ell$ strands. Here $\ell\geq1$, $d\geq0$,
$2\leq r_1<\cdots<r_\ell$, the exponents $s_i$ are positive for
$i<\ell$, and $s_\ell\in\mathbb Z\setminus\{-1,0,1\}$.
We recall the full definition in Section~\ref{sec:background}.
Presentations are regarded as distinct when their parameter strings
differ, even if their braid closures are ambient isotopic.

Our main theorem gives an explicit formula for these families of
generalised $T$-link presentations.

\begin{theorem}
\label{thm:main-intro}
Let
\[
   \mathcal P=((r_1,s_1),\ldots,(r_\ell,s_\ell),d)
\]
be any generalised $T$-link presentation, and let $L$ be the link
represented by $\mathcal P$. Put
\[
   R_0=r_\ell,
   \qquad
   s=s_\ell,
\]
and define
\[
   k_0=\max\left\{
      2,\left\lfloor\frac{-s}{R_0}\right\rfloor+1
   \right\}.
\]

For every integer $n\geq1$ and every finite sequence of integers
\[
   \mathbf{k}=(k_1,\ldots,k_n)
\]
satisfying
\[
   k_1\geq k_0,
   \qquad
   k_i\geq2\quad(2\leq i\leq n),
\]
define
\[
   R_i=k_iR_{i-1},
   \qquad 1\leq i\leq n,
\]
and set
\[
\begin{aligned}
   \mathcal P_{\mathbf{k}}=\bigl(
      &(r_1,s_1),\ldots,(r_{\ell-1},s_{\ell-1}),\\
      &(R_0,s+R_1),
      \bigl(R_i,R_{i+1}-R_{i-1}\bigr)_{i=1}^{n-1},\\
      &(R_n,-R_{n-1}),d
   \bigr),
\end{aligned}
\]
where the initial list is empty when $\ell=1$, and the indexed
middle list is empty when $n=1$.

Then the following statements hold.
\begin{enumerate}
\item
Every $\mathcal P_{\mathbf{k}}$ is a generalised $T$-link
presentation representing $L$.

\item
The presentations $\mathcal P_{\mathbf{k}}$, as $\mathbf{k}$ ranges
over all finite sequences satisfying the preceding conditions,
are pairwise distinct.
\end{enumerate}

Consequently, every link in $S^3$ admits infinitely many pairwise
distinct generalised $T$-link presentations.
\end{theorem}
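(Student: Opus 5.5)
The plan is to reduce everything to a single \emph{elementary move} on the last block and then iterate it. The move replaces the final pair $(R,t)$ of a generalised $T$-link presentation by the two pairs $(R,t+kR),(kR,-R)$, for an integer $k\geq2$ with $t+kR>0$, and it should preserve the represented link. For $n=1$ this is exactly $\mathcal P_{(k_1)}$. For general $n$, apply the move with $k=k_n$ to the last pair $(R_{n-1},-R_{n-2})$ of $\mathcal P_{(k_1,\dots,k_{n-1})}$. This produces
\[
(R_{n-1},\,R_n-R_{n-2}),\ (R_n,\,-R_{n-1}),
\]
which is precisely $\mathcal P_{\mathbf k}$. Part (1) then follows by induction on $n$, once we know the move preserves the link and the output is admissible.

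\textbf{Admissibility.} This is routine bookkeeping against the definition recalled in Section~\ref{sec:background}:
\begin{itemize}
\item The $r$-entries $r_1<\cdots<r_{\ell-1}<R_0<R_1<\cdots<R_n$ are strictly increasing because each $k_i\geq2$.
\item The non-final exponents are positive. We have $s+R_1=s+k_1R_0>0$ precisely because $k_1\geq\lfloor -s/R_0\rfloor+1$. For $i\ge1$, $R_{i+1}-R_{i-1}>0$ holds trivially.
\item The final exponent satisfies $-R_{n-1}\leq -R_0\leq-2$, so it lies in $\mathbb Z\setminus\{-1,0,1\}$.
\item The offset $d$ is unchanged.
\end{itemize}

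\textbf{Invariance of the move (main obstacle).} Write $\beta=\delta_{r_1,d}^{s_1}\cdots\delta_{r_{\ell-1},d}^{s_{\ell-1}}$ and $M=kR$. Since $\delta_{R,d}^{M}=(\delta_{R,d}^R)^k=\Delta_{R,d}^{2k}$ is $k$ full twists on the $R$ strands, the move asserts that
\[
\widehat{\beta\,\delta_{R,d}^{t}\,\Delta_{R,d}^{2k}\,\delta_{M,d}^{-R}}=\widehat{\beta\,\delta_{R,d}^{t}}.
\]
My first attempt is geometric. The closure of $\delta_{M,d}^{-R}$ on the last $M$ strands is a $(M,-R)$-torus-type braid. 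By the torus-link symmetry that Birman and Kofman use to prove the $T$-link duality $T((r_1,s_1),\dots,(r,s))=T((r_1,s_1),\dots,(s,r))$ \cite{BirmanKofman2009}, one can exchange the roles of the braid axis and the core of the torus. After this exchange, $\delta_{M}^{-R}$ becomes $-k$ full twists on $R$ strands, i.e.\ $\Delta_R^{-2k}$, glued along the same $R$ strands that carry $\beta\delta_R^t$. That exactly cancels the factor $\Delta_R^{2k}$.

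Concretely, I would argue as follows. Let $D$ be a disk meeting the $R$ strands. A $-1/k$ Dehn twist, or equivalently $k$ negative full twists along $D$, turns $\delta_M^{-R}$ into a trivial braid. The extra $M-R$ strands created by the torus pattern are then removed by Markov destabilisations. The delicate points are twofold. First, this must be done with the $d$ offset strands present, but these are untouched because every block is shifted by $d$. Second, the argument must work for negative final exponent, which Birman--Kofman do not treat. If the geometric argument becomes messy, the fallback is an algebraic one: write $\delta_M^R$ as a product of conjugates of $\delta_R^{M}$-type blocks via $\delta_M^{R}=\Delta_R^{2k}\cdot\gamma$, where $\gamma$ is Markov-destabilisable, and then verify the identity by braid relations plus Markov moves.

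\textbf{Part (2).} The number of pairs in $\mathcal P_{\mathbf k}$ is $\ell+n$, so $\mathcal P_{\mathbf k}$ determines $n$. Its last $n+1$ $r$-entries are $R_0,\dots,R_n$, which recover $k_i=R_i/R_{i-1}$. Hence $\mathbf k\mapsto\mathcal P_{\mathbf k}$ is injective. Since the admissible $\mathbf k$ form an infinite set (e.g.\ $n=1$, $k_1\geq k_0$), and every link has at least one presentation by \cite{dePaivaHuiRodriguez2024}, the final consequence follows.
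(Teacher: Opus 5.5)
\textbf{Verdict: there is a genuine gap.} The invariance of your elementary move, which is the only nontrivial step, is never proved.

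Your skeleton is correct and follows the paper's own route (Theorem~\ref{thm:explicit-family} followed by induction on $n$). That skeleton consists of the elementary move $(R,t)\mapsto(R,t+kR),(kR,-R)$, the induction on $n$, the admissibility bookkeeping, and the injectivity argument for part (2).

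What is missing is a proof that the move preserves the link, i.e.\ that $\widehat{B\delta_R^{kR}\delta_{kR}^{-R}}\cong\widehat{B}$ for $B\in\br_R$. Equivalently, you need the negative-exponent duality $\widehat{B\delta_{kR}^{-R}}\cong\widehat{B\delta_R^{-kR}}$ with an arbitrary braid $B$ on the first $R$ strands. None of your three lines of argument supplies this.
\begin{itemize}
\item Citing Birman--Kofman does not close it. As you note yourself, their duality concerns $T$-links, whose exponents are positive.
\item The Dehn-twist sentence is not an isotopy argument. A full twist along a disk spanning the first $R$ strands inserts $\Delta_R^{\pm2}$ there, which in general changes the link type. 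It also cannot make the $M$-strand braid $\delta_M^{-R}$ trivial: $\Delta_R^{2j}\delta_M^{-R}$ always has nontrivial permutation when $R<M$. Your picture only makes sense after re-viewing $\widehat{\delta_M^{-R}}$ as a closed $R$-braid about the dual axis. That re-viewing is exactly the duality to be proved, now in the presence of $B$.
\item The algebraic fallback only renames the claim. Put $\gamma=\Delta_R^{-2k}\delta_M^{R}$. Centrality of $\Delta_R^2$ in $\br_R$ shows the move is equivalent to $\widehat{B\gamma^{-1}}\cong\widehat{B}$. Calling $\gamma$ ``Markov-destabilisable'' asserts precisely this, with no mechanism. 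Destabilising $\gamma$ on its own would not suffice, because the destabilisations must be compatible with an arbitrary braid on the first $R$ strands.
\end{itemize}

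The missing idea is to remove the extra $M-R$ strands one at a time and record what each removal leaves behind, as in Proposition~\ref{prop:negative-reduction}.
\begin{itemize}
\item Take $\gamma\in\br_r$ and $m>r\geq q$. The $m$th strand is vertical through $\gamma$ and passes under the block, so removing it gives $\widehat{\gamma\delta_m^{-q}}\cong\widehat{\gamma C_{m,q}\delta_{m-1}^{-q}}$, where $C_{m,q}=\sigma_{m-q}^{-1}\cdots\sigma_{m-2}^{-1}$.
\item The relations $\delta_n\sigma_i\delta_n^{-1}=\sigma_{i+1}$ and the centrality of $\delta_n^n$ give $C_{m,q}\delta_{m-1}^{-q}=\delta_{m-1}^{-q}A_q$, with $A_q=\sigma_1^{-1}\cdots\sigma_{q-1}^{-1}$.
\item A cyclic permutation then yields $\widehat{A_q\gamma\delta_{m-1}^{-q}}$, again with an $r$-braid in front of the block, so the step can be repeated.
\item Take $q=r$, so that $A_r=\overline{\delta}_r^{-1}$. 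After $(k-1)r$ steps the accumulated factor is $\overline{\delta}_r^{-(k-1)r}\delta_r^{-r}=\Delta_r^{-2k}$. This cancels $\delta_r^{kr}=\Delta_r^{2k}$, giving Corollary~\ref{cor:cancellation}.
\end{itemize}
Your proposal needs an explicit sequence of conjugations and Markov destabilisations of this kind in place of the heuristic.
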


For each fixed $n$, the theorem allows the $n$ integers
$k_1,\ldots,k_n$ to be chosen independently, subject only to the
stated lower bounds, and different choices give different parameter
strings. Moreover, $n$ itself is arbitrary. Thus every link admits
explicit families with arbitrarily many independently chosen integer
parameters. The construction retains the first $\ell-1$ twisting
pairs and the offset of the initial presentation, while increasing
the number of twisting pairs from $\ell$ to $\ell+n$.

Since generalised $T$-links represent all links in $S^3$, understanding
when two different parameter strings determine the same link is a
natural part of the classification problem for this universal
parametrisation. Theorem~\ref{thm:main-intro} identifies a large and
explicit source of non-uniqueness: every link admits families of
pairwise distinct presentations depending on arbitrarily many
independent integer parameters. 

This naturally suggests going one step further. Theorem~\ref{thm:main-intro} identifies an explicit family of equivalences among generalised \(T\)-link presentations. One may therefore ask what remains of the non-uniqueness after presentations related by these equivalences are identified.

\begin{question}
\label{question:reduced-multiplicity}
If we disregard the repetitions arising from the transformations
constructed in this paper, does every link in the 3-sphere still admit
infinitely many distinct generalised $T$-link presentations?
\end{question}

More broadly, determining the equivalence relations among generalised
$T$-link parameters may provide a useful approach to understanding
the classification of this universal family. Theorem~\ref{thm:main-intro}
gives one explicit family of such relations, while
Question~\ref{question:reduced-multiplicity} asks whether the
redundancy of the parametrisation persists even after these relations
have been taken into account.

The paper is organised as follows. Section~\ref{sec:background}
recalls generalised $T$-links and their universality.
Section~\ref{sec:negative-reduction} establishes the negative
braid-reduction isotopy and the cancellation identity underlying our
construction. In Section~\ref{sec:infinite-representations}, we first
prove the construction for a single step and then obtain
Theorem~\ref{thm:main-intro} by iteration.

\section*{Acknowledgements}

The authors thank Pierre Dehornoy for a question posed to the first
author during the workshop \emph{Dynamics, Foliations, and Geometry III},
held at MATRIX in Australia, which helped motivate this work.

The authors used a large language model to generate revised wording for portions of the manuscript and improve its exposition. The authors take full intellectual responsibility for the entire content, including all mathematical statements, proofs, and references.

\section{Generalised \texorpdfstring{$T$}{T}-links}
\label{sec:background}

In this section, we recall the definition of generalised $T$-links and
the universality result used in the proof of our main theorem. We also
fix the braid notation used throughout the paper.

Let $\br_m$ denote the braid group on $m$ strands with standard
generators $\sigma_1,\ldots,\sigma_{m-1}$. For integers $m\geq2$ and
$d\geq0$, set
\[
   \delta_{m,d}
   =\sigma_{d+1}\sigma_{d+2}\cdots\sigma_{d+m-1},
   \qquad
   \delta_m=\delta_{m,0}.
\]
The braid $\delta_{m,d}\in\br_{d+m}$ is supported on strands
$d+1,\ldots,d+m$, and its first $d$ strands are trivial. 

We denote the closure of a
braid $\beta$ by $\widehat{\beta}$ and use $\cong$ to denote ambient
isotopy of links in $S^3$.

We recall the definition of generalised $T$-links from
\cite[Definition~3.8]{dePaivaHuiRodriguez2024}.

\begin{definition}
\label{def:generalised-t-link}
Let $\ell\geq1$ and $d\geq0$ be integers, and let
$r_1,\ldots,r_\ell$ be integers satisfying
\[
   2\leq r_1<\cdots<r_\ell.
\]
Let $s_1,\ldots,s_{\ell-1}$ be positive integers, and let
\[
   s_\ell\in\mathbb Z\setminus\{-1,0,1\}.
\]
A \emph{generalised $T$-link presentation} is a parameter string
\[
   \mathcal P=((r_1,s_1),\ldots,(r_\ell,s_\ell),d).
\]
It represents the generalised $T$-link
\[
   \Tlink((r_1,s_1),\ldots,(r_\ell,s_\ell),d),
\]
defined as the closure of the braid
\[
   \delta_{r_1,d}^{s_1}\cdots\delta_{r_\ell,d}^{s_\ell}
   \in\br_{d+r_\ell}.
\]
The integers $r_\ell$ and $s_\ell$ are called the \emph{final
strand parameter} and the \emph{final twisting exponent},
respectively. When $\ell=1$, the list preceding the final pair
is empty.
\end{definition}

Two presentations are regarded as distinct when their parameter
strings differ, even if the links they represent are ambient isotopic.

When $d=0$ and $s_\ell>0$, the defining braid belongs to the ordinary
$T$-link family of Birman and Kofman \cite{BirmanKofman2009}.
Generalised $T$-links allow the final twisting exponent to be negative
while requiring all preceding twisting exponents to remain positive.

The offset $d$ records $d$ additional split unknotted components.
Indeed, the first $d$ strands do not meet any generator in the defining
braid, and their closures form a split union of $d$ unknots. Thus the
represented link is the split union of these components with the
closure of
\[
   \delta_{r_1}^{s_1}\cdots\delta_{r_\ell}^{s_\ell}
   \in\br_{r_\ell}.
\]
In particular, any presentation of a knot or a nonsplit link must
have $d=0$.

The restriction on the final exponent does not exclude unlinks.
Indeed, $\Tlink((2,2),(3,-2),d)$ represents the unlink with $d+1$
components. Using the Artin braid relation
\[
   \sigma_2^{-1}\sigma_1^{-1}\sigma_2^{-1}
   =
   \sigma_1^{-1}\sigma_2^{-1}\sigma_1^{-1},
\]
we obtain
\[
\begin{aligned}
   \sigma_1^{-1}
   \bigl(\delta_2^2\delta_3^{-2}\bigr)
   \sigma_1
   &=
   \sigma_1\sigma_2^{-1}\sigma_1^{-1}\sigma_2^{-1}\\
   &=
   \sigma_2^{-1}\sigma_1^{-1}.
\end{aligned}
\]
After a cyclic permutation of the factors, two negative Markov
destabilisations reduce the braid on the right to the trivial
one-strand braid. Its closure is therefore the unknot, while the
offset contributes $d$ additional split unknotted components.

We will use the following universality result
\cite[Corollary~3.13]{dePaivaHuiRodriguez2024}.

\begin{theorem}
\label{thm:generalised-t-universality}
Every link in $S^3$ admits a generalised $T$-link presentation.
\end{theorem}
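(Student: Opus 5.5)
Since this result is quoted from \cite[Corollary~3.13]{dePaivaHuiRodriguez2024}, I will only outline how I would reprove it. The plan has three steps: represent the link as a braid closure, normalise the braid so that all the negativity sits in one central factor, and then rewrite the positive part as an increasing product of twisting blocks.

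\emph{Step 1: braid form and a central negative factor.} By Alexander's theorem, $L\cong\widehat{\beta}$ for some $\beta\in\br_m$. I first separate off split unknotted components: each one that can be isolated as an untouched strand is recorded in the offset $d$. This reduces the problem to the case $d=0$. Next, $\delta_m^m$ is the full twist $\Delta^2$, which is central in $\br_m$. By Garside's theorem, $\beta=\delta_m^{-mk}P$ with $k\geq0$ and $P$ a positive word.

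\emph{Step 2: rewrite $P$ in twisting blocks.} Since $\delta_m\sigma_i\delta_m^{-1}=\sigma_{i+1}$ for $i\leq m-2$, every generator can be written as
\[
   \sigma_i=\delta_m^{\,i-1}\,\delta_2\,\delta_m^{-(i-1)}.
\]
Using centrality of $\delta_m^m$, each negative power $\delta_m^{-(i-1)}$ can be replaced by the positive power $\delta_m^{\,m-i+1}$ at the cost of one more full twist inverse. This gives
\[
   \beta=\delta_m^{-mk'}\prod_j \delta_2^{a_j}\delta_m^{b_j},
   \qquad a_j,b_j\geq0 .
\]
After cyclic permutation, the negative central factor can be moved to the end of the word.

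\emph{Step 3: reorder so the strand parameters increase (main obstacle).} The blocks in Step 2 alternate between $r=2$ and $r=m$, while Definition~\ref{def:generalised-t-link} requires $r_1<\cdots<r_\ell$. The exponents $a_j,b_j$ are already nonnegative, so positivity is not the issue; the only problem is this ordering. I would try to sort the blocks by Markov stabilisation, exploiting the identity $\delta_{r+1}=\delta_r\sigma_r$. Each interleaved block $\delta_m^{b}$ that is followed by a small block would be traded, after adding strands, for blocks on more strands. The goal is to reach the Lorenz-type normal form in the positive part, where blocks are nested with increasing $r$, in the spirit of Birman--Kofman \cite{BirmanKofman2009}. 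Adding strands changes the full twist, so the central negative factor must be re-expressed as $\delta_{R}^{-Rk''}$ on the final number of strands $R$; I would check that this is consistent with the earlier blocks.

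Finally, I would merge the last positive block with the negative twist to obtain the single final exponent $s_\ell$. If $s_\ell$ lands in $\{-1,0,1\}$, I would adjust it by a further stabilisation or by absorbing the last block, using the unlink example $\Tlink((2,2),(3,-2),d)$ as a model for such degenerate cases. I expect the ordering argument in Step 3 to be the technical heart of the proof.
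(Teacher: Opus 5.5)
This statement is not proved in the paper. It is quoted from \cite[Corollary~3.13]{dePaivaHuiRodriguez2024} and used only to supply a starting presentation, so there is no internal argument to compare with. Judged on its own, your outline has a genuine gap, and it sits exactly where you place the ``technical heart''.

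Steps 1 and 2 are correct but routine. Garside's theorem, $\sigma_i=\delta_m^{i-1}\delta_2\delta_m^{-(i-1)}$ and centrality of $\delta_m^m$ give $\beta=\delta_m^{-mk'}W$ with $W$ a positive word in $\sigma_1$ and $\delta_m$. This says no more than that $\sigma_1$, $\delta_m$ and the inverse full twist generate $\br_m$ as a monoid. All the content of universality is in Step 3, which you state only as an intention. Markov stabilisation $\widehat{\gamma}\cong\widehat{\gamma\sigma_n^{\pm1}}$ acts only at the end of the whole braid, and $\delta_{r+1}=\delta_r\sigma_r$ is just the definition of $\delta_{r+1}$. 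Neither tells you how to move an interior block $\delta_m^{b}$ past a later $\delta_2^{a}$, and these blocks do not commute. Markov's theorem guarantees that some sequence of conjugations and stabilisations reaches any braid with the same closure. Producing one that ends in nested increasing form is precisely the theorem, so this is not a bookkeeping issue about ordering.

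The negative factor is where the sketch actually breaks. Once you stabilise to $R>m$ strands, $\delta_m^{-mk'}$ is an inverse full twist on the first $m$ strands only. It is a nontrivial pure braid that is not central in $\br_R$, hence not a power of $\delta_R$: the pure powers of $\delta_R$ are the central elements $\delta_R^{Rt}$. Replacing it by $\delta_R^{-Rk''}$ changes the link even when $W$ is trivial. You get $T(m,-mk')$ together with $R-m$ split unknots, versus a torus link on $R$ strands that is either nonsplit or an $R$-component unlink. So ``checking consistency with the earlier blocks'' is not a verification; it is the missing argument.

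The closure-level moves of this type that are actually available all require the rest of the braid to be supported on the strands carrying the twist. These are Proposition~\ref{prop:negative-reduction}, its positive analogue \cite[Proposition~2.1]{dePaiva2024}, and Corollary~\ref{cor:negative-full-twist}, which gives $\widehat{\gamma\delta_r^{-kr}}\cong\widehat{\gamma\delta_{kr}^{-r}}$ for $\gamma\in\br_r$. In your setting (for $k'\geq2$), you could turn the negative factor into a legitimate final block $\delta_{k'm}^{-m}$ only after $W$ has already been put into $T$-form inside $\br_m$, up to conjugation, with no stabilisation in between. Doing that, or finding a stabilising procedure compatible with the negative twist, is exactly what is absent.

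By contrast, your final adjustment when $s_\ell\in\{-1,0,1\}$ is harmless. Markov destabilisations fold $\delta_{r_\ell}^{\pm1}$ into the preceding block, and $s_\ell=0$ only produces split unknots. That does not rescue the main step.
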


This theorem provides an initial presentation for every link type.
In Section~\ref{sec:infinite-representations}, we construct, from any
such presentation, explicit infinite families of pairwise distinct
generalised $T$-link presentations representing the same link.

\section{A negative braid-reduction isotopy}
\label{sec:negative-reduction}

The construction of infinitely many presentations in the next section
relies on a cancellation between a positive twisting block and a negative
twisting block supported on a larger number of strands. To establish this
cancellation, we introduce a braid-reduction isotopy that removes the
additional strands one at a time. At each step, the effect of the removed
strand is recorded by a negative partial twist on the first $q$ strands.

The strand-removal step is the negative counterpart of the positive
move described in \cite[Proposition~2.1 and Figure~1]{dePaiva2024} or \cite[Proposition~3.2]{Lorenzknots}.

Recall that
\[
   \delta_m=\sigma_1\cdots\sigma_{m-1},
\]
and set
\[
   \overline{\delta}_m=\sigma_{m-1}\cdots\sigma_1.
\]
The standard full-twist identities are
\[
   \delta_m^m=\overline{\delta}_m^m=\Delta_m^2.
\]
Here $\Delta_m$ denotes the Garside half-twist on $m$ strands;
its square is the full twist and is central in $\br_m$.

\begin{proposition}
\label{prop:negative-reduction}
Let $p,q,r$ be integers satisfying
\[
   1<q\leq r<p,
\]
and let $\beta\in\br_r$, regarded as an element of $\br_p$ by adding
$p-r$ trivial strands on the right. Then
\[
   \widehat{\beta\delta_p^{-q}}
   \cong
   \widehat{
      (\sigma_1^{-1}\sigma_2^{-1}\cdots
      \sigma_{q-1}^{-1})^{p-r}
      \beta\delta_r^{-q}
   }.
\]
\end{proposition}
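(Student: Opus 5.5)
The plan is to argue by induction on $p-r\geq1$. All the content goes into a single strand-removal step, which reduces $p$ to $p-1$ at the cost of one factor $\sigma_1^{-1}\cdots\sigma_{q-1}^{-1}$. Precisely, I would first prove the following one-step claim.

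\emph{One-step claim.} For $1<q\leq r\leq p-1$ and any $\gamma\in\br_{p-1}$ whose support lies in the first $r$ strands,
\[
   \widehat{\gamma\delta_p^{-q}}\cong\widehat{(\sigma_1^{-1}\cdots\sigma_{q-1}^{-1})\,\gamma\,\delta_{p-1}^{-q}}.
\]

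Granting this, the proposition follows by applying the claim $p-r$ times. At the $j$-th application one takes
\[
   \gamma=(\sigma_1^{-1}\cdots\sigma_{q-1}^{-1})^{j-1}\beta .
\]
This $\gamma$ still lies in $\br_r$ because $q\leq r$, and the number of strands drops from $p-j+1$ to $p-j\geq r$. The hypothesis $q\leq r$ is used exactly here, to keep the accumulated prefix inside $\br_r$ at every stage.

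For the one-step claim I would use the two standard facts
\[
   \delta_p=\delta_{p-1}\sigma_{p-1}
   \qquad\text{and}\qquad
   \delta_p\sigma_i\delta_p^{-1}=\sigma_{i+1}\quad(1\leq i\leq p-2).
\]
Together they let one collect all occurrences of $\sigma_{p-1}$ in $\delta_p^{q}$ into a single positive band. The aim is a factorisation of the shape
\[
   \delta_p^{q}=W\,\delta_{p-1}^{q},
   \qquad
   W=\text{(product of $\sigma_i$ with $i<p-1$)}\cdot\sigma_{p-1},
\]
in which $\sigma_{p-1}$ appears exactly once in $W$. Geometrically this says that strand $p$ crosses the other strands only once in the picture of $\delta_p^q$, because $q<p$. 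Inverting gives a word for $\delta_p^{-q}$ in which $\sigma_{p-1}^{-1}$ occurs once and everything else lies in $\br_{p-1}$. After a cyclic permutation placing that letter at the end of the word, a negative Markov destabilisation deletes strand $p$. It leaves $\gamma$, the power $\delta_{p-1}^{-q}$, and a leftover word in negative generators of the form $\sigma_{j}^{-1}\sigma_{j+1}^{-1}\cdots\sigma_{j+q-2}^{-1}$, i.e.\ $q-1$ consecutive generators starting at some index $j$.

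The last step is to move this leftover to the front with indices $1,\ldots,q-1$. To do so I would use the shift relation in the form $\delta_{p-1}^{-1}\sigma_{i+1}=\sigma_i\delta_{p-1}^{-1}$. Passing the leftover through the $q$ factors of $\delta_{p-1}^{-1}$ lowers its indices, and a final cyclic permutation of the closed braid brings it in front of $\gamma$. This conjugation does not disturb $\gamma$, since it is only applied to the part of the word outside $\gamma$.

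I expect the main obstacle to be exactly this index bookkeeping. One has to choose the factorisation of $\delta_p^{\pm q}$, i.e.\ which of $\delta_{p-1}^{q}W$ or $W\delta_{p-1}^{q}$ to use and where to destabilise, so that after destabilisation and cyclic permutation the leftover lands precisely on $\sigma_1^{-1}\cdots\sigma_{q-1}^{-1}$ to the left of $\gamma$, with no index running past $p-2$. If the algebra becomes unwieldy, I would fall back on the picture. In the closure of $\gamma\delta_p^{-q}$, strand $p$ passes once under a band of $q$ strands, namely the first $q$ strands after the twist. Removing it by a Reidemeister-type slide around the closure, the negative counterpart of \cite[Proposition~2.1]{dePaiva2024}, visibly leaves a negative partial twist $\sigma_1^{-1}\cdots\sigma_{q-1}^{-1}$ on the first $q$ strands. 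I would then transcribe that picture into the word identity.
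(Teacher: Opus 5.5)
Your architecture matches the paper's: a one-strand reduction, iterated $p-r$ times, with $q\le r$ keeping the accumulated prefix in $\br_r$. Doing the first half algebraically, via a single $\sigma_{p-1}^{-1}$ and a Markov destabilisation, is a legitimate and more checkable substitute for the paper's arc-pushing picture.

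However, the factorisation shape you aim for does not exist. There is no $\delta_p^q=X\sigma_{p-1}\delta_{p-1}^q$ with $X\in\br_{p-1}$, already for $p=3$, $q=2$. Under the permutation map, $\delta_3^2\mapsto(s_1s_2)^2=s_2s_1$, whereas $X\sigma_2\sigma_1^2\mapsto s_2$ or $s_1s_2$. The factorisation that works is
\[
   \delta_p^q=\delta_{p-1}^q\,\sigma_{p-1}\sigma_{p-2}\cdots\sigma_{p-q}\qquad(1\le q\le p-1),
\]
proved by induction on $q$ from $\sigma_{i+1}\delta_p=\delta_p\sigma_i$. Inverting gives $\delta_p^{-q}=C\,\sigma_{p-1}^{-1}\,\delta_{p-1}^{-q}$ with $C=\sigma_{p-q}^{-1}\cdots\sigma_{p-2}^{-1}$. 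A cyclic permutation and destabilisation then give $\widehat{\gamma\delta_p^{-q}}\cong\widehat{\gamma C\delta_{p-1}^{-q}}$, which is exactly the paper's intermediate braid. Geometrically, the strand at position $p$ crosses $q$ strands, not one; what matters is that these crossings form a single consecutive band.

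The genuine gap is the transport step you flagged. Put $n=p-1$. The leftover $C$ sits to the \emph{left} of $\delta_n^{-q}$, and it must end up to its right, so that a cyclic permutation of the whole word puts it in front of $\gamma$.

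\begin{itemize}
\item Moving a letter rightward through $\delta_n^{-1}$ raises its index, $\sigma_i\delta_n^{-1}=\delta_n^{-1}\sigma_{i+1}$. This holds only for $i\le n-2$, and $C$ contains $\sigma_{n-1}^{-1}$, so the shift relation stalls immediately.
\item Moving letters leftward, which is where indices drop, goes in the wrong direction.
\item You cannot conjugate only the part of the word outside $\gamma$: that changes the closure unless the conjugator commutes with $\gamma$.
\end{itemize}

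The missing ingredient is the centrality of the full twist $\delta_n^n=\Delta_n^2$. With $A_q=\sigma_1^{-1}\cdots\sigma_{q-1}^{-1}$, one has $C=\delta_n^{n-q}A_q\delta_n^{-(n-q)}$; each conjugation by $\delta_n$ here is applied to a letter of index at most $n-2$. Hence
\[
   C\delta_n^{-q}=\delta_n^{n-q}A_q\delta_n^{-n}=\delta_n^{-q}A_q,
\]
and therefore $\widehat{\gamma C\delta_n^{-q}}=\widehat{\gamma\delta_n^{-q}A_q}\cong\widehat{A_q\gamma\delta_n^{-q}}$. This is exactly how the paper finishes.

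Your pictorial fallback has the same hole. The slide leaves the negative partial twist on the \emph{last} $q$ strands, namely $C$. Relocating it to the first $q$ strands is precisely this full-twist computation, not something visible from the picture.
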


\begin{proof}
For brevity, set
\[
   A_q=\sigma_1^{-1}\sigma_2^{-1}\cdots\sigma_{q-1}^{-1}.
\]

We first describe an isotopy that reduces the number of strands by one.
Let $m>r$ be an integer and regard $\gamma\in\br_r$ as an element of
$\br_m$ by adding trivial strands on the right. We claim that
\begin{equation}
\label{eq:one-step-negative}
   \widehat{\gamma\delta_m^{-q}}
   \cong
   \widehat{A_q\gamma\delta_{m-1}^{-q}}.
\end{equation}

Draw $\gamma$ above $\delta_m^{-q}$. Starting at position $m$ at the top of $\delta_m^{-q}$,
follow the strand upwards through $\gamma$. Since $m>r$, this part
of the strand is vertical. Continue along the braid closure to
position $m$ at the bottom of $\delta_m^{-q}$. Traversing the
negative twisting block upwards, the strand passes under one strand
in each layer and reaches position $m-q$ at the top of the block.
Thus, the $m$th and $(m-q)$th top positions of $\delta_m^{-q}$ are
connected by an underpassing arc that travels once around the braid
closure.

Push this long arc down and shrink it beneath the remaining braid.
This removes the $m$th strand and changes the negative twisting block
$\delta_m^{-q}$ into $\delta_{m-1}^{-q}$. The shortened strand passes
under the last $q-1$ strands involved in the reduction and produces,
in order, the negative crossings
\[
   C_{m,q}
   =\sigma_{m-q}^{-1}\sigma_{m-q+1}^{-1}
      \cdots\sigma_{m-2}^{-1}.
\]
Thus, after this step, the resulting closed braid is represented by
the $(m-1)$-braid
\[
   \gamma
   \bigl(
      \sigma_{m-q}^{-1}\sigma_{m-q+1}^{-1}
      \cdots\sigma_{m-2}^{-1}
   \bigr)
   \delta_{m-1}^{-q}
   =\gamma C_{m,q}\delta_{m-1}^{-q}.
\]

Next, slide the shortened strand beneath the remaining twisting
block and pull it around the braid closure to the top of the
diagram. This transports the partial twist from the last $q$
strands to the first $q$ strands. We verify the order of the
resulting crossings algebraically. Put $n=m-1$. The Artin braid
relations give
\[
   \delta_n\sigma_i^{-1}\delta_n^{-1}
   =\sigma_{i+1}^{-1},
   \qquad 1\leq i\leq n-2.
\]
Since $q\leq n$, it follows that
\[
   C_{m,q}
   =\delta_n^{n-q}A_q\delta_n^{-(n-q)}.
\]
Using the centrality of $\delta_n^n$, we obtain
\[
   \delta_n^q C_{m,q}\delta_n^{-q}
   =\delta_n^n A_q\delta_n^{-n}
   =A_q.
\]
Equivalently,
\[
   C_{m,q}\delta_n^{-q}=\delta_n^{-q}A_q.
\]
Consequently, moving this partial twist through the twisting block
and then cyclically permuting the factors inside the braid closure
gives
\[
   \widehat{\gamma C_{m,q}\delta_n^{-q}}
   =\widehat{\gamma\delta_n^{-q}A_q}
   \cong\widehat{A_q\gamma\delta_n^{-q}}.
\]
This proves~\eqref{eq:one-step-negative}.

We now apply~\eqref{eq:one-step-negative} repeatedly. Since $q\leq r$,
the braid $A_q$ is supported on the first $r$ strands. Hence, after
each reduction, the braid preceding the negative twisting block
remains an $r$-braid, and the same isotopy can be applied again.
Therefore,
\[
   \widehat{\beta\delta_p^{-q}}
   \cong
   \widehat{A_q^j\beta\delta_{p-j}^{-q}},
   \qquad 0\leq j\leq p-r.
\]
Taking $j=p-r$ and substituting the definition of $A_q$ proves the
proposition.
\end{proof}

The special case $q=r$ provides the mechanism needed in the construction
of infinitely many presentations.

\begin{corollary}
\label{cor:negative-full-twist}
Let $r\geq2$ and $p>r$ be integers, and let $\beta\in\br_r$.
Then
\[
   \widehat{\beta\delta_p^{-r}}
   \cong
   \widehat{
      \beta\overline{\delta}_r^{-(p-r)}\delta_r^{-r}
   }.
\]
If $p=kr$ for some integer $k\geq2$, then
\[
   \widehat{\beta\delta_{kr}^{-r}}
   \cong
   \widehat{\beta\delta_r^{-kr}}.
\]
\end{corollary}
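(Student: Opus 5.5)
We apply Proposition~\ref{prop:negative-reduction} with $q=r$ and $p>r$, then simplify the extra factor $A_r=\sigma_1^{-1}\cdots\sigma_{r-1}^{-1}$. Since $\sigma_1^{-1}\sigma_2^{-1}\cdots\sigma_{r-1}^{-1}=(\sigma_{r-1}\cdots\sigma_1)^{-1}=\overline{\delta}_r^{-1}$, the proposition gives
\[
   \widehat{\beta\delta_p^{-r}}
   \cong
   \widehat{\overline{\delta}_r^{-(p-r)}\beta\delta_r^{-r}}.
\]
We would then move the factor $\overline{\delta}_r^{-(p-r)}$ to the right of $\beta$. We do not expect it to commute with $\beta$. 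Instead, we cyclically permute inside the closure, writing
\[
   \widehat{\overline{\delta}_r^{-(p-r)}\beta\delta_r^{-r}}
   \cong
   \widehat{\beta\delta_r^{-r}\overline{\delta}_r^{-(p-r)}}.
\]
Since $\delta_r^{-r}=\Delta_r^{-2}$ is central in $\br_r$, it commutes with $\overline{\delta}_r^{-(p-r)}$. This yields $\beta\overline{\delta}_r^{-(p-r)}\delta_r^{-r}$, which is the first claim. Conjugation invariance of closures justifies the cyclic permutation. All braids here live in $\br_r$, because $q=r$ makes $A_r$ supported on the first $r$ strands.

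For the second claim, set $p=kr$ with $k\geq2$, so that $p-r=(k-1)r$. Then
\[
   \overline{\delta}_r^{-(k-1)r}
   =(\overline{\delta}_r^{\,r})^{-(k-1)}
   =\Delta_r^{-2(k-1)}
   =\delta_r^{-(k-1)r},
\]
by the full-twist identities $\delta_r^r=\overline{\delta}_r^r=\Delta_r^2$. Hence $\beta\overline{\delta}_r^{-(k-1)r}\delta_r^{-r}=\beta\delta_r^{-kr}$, which gives the second claim.

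The only delicate point is the bookkeeping in the first claim. We must check that the identification $A_r=\overline{\delta}_r^{-1}$ has the correct orientation, which it does, since inverting reverses the order of the generators. We must also check that placing the power of $A_r$ on the left of $\beta$, rather than on the right, is harmless. Centrality of the full twist, combined with a cyclic permutation inside the closure, handles this.
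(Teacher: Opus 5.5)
Your proposal is correct and follows the paper's proof essentially step for step. You specialise Proposition~\ref{prop:negative-reduction} to $q=r$, identify $\sigma_1^{-1}\cdots\sigma_{r-1}^{-1}$ with $\overline{\delta}_r^{-1}$, cyclically permute inside the closure, and use centrality of $\delta_r^{-r}=\Delta_r^{-2}$; then, for $p=kr$, you collapse $\overline{\delta}_r^{-(k-1)r}\delta_r^{-r}$ to $\delta_r^{-kr}$ via the full-twist identities $\delta_r^r=\overline{\delta}_r^r=\Delta_r^2$.
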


\begin{proof}
Putting $q=r$ in Proposition~\ref{prop:negative-reduction} gives
\[
   \widehat{\beta\delta_p^{-r}}
   \cong
   \widehat{
      \overline{\delta}_r^{-(p-r)}\beta\delta_r^{-r}
   },
\]
because
\[
   \sigma_1^{-1}\sigma_2^{-1}\cdots\sigma_{r-1}^{-1}
   =\overline{\delta}_r^{-1}.
\]
A cyclic permutation of the factors inside the braid closure gives
\[
   \widehat{
      \overline{\delta}_r^{-(p-r)}\beta\delta_r^{-r}
   }
   \cong
   \widehat{
      \beta\delta_r^{-r}\overline{\delta}_r^{-(p-r)}
   }.
\]
Since $\delta_r^{-r}=\Delta_r^{-2}$ is central in $\br_r$, the
latter braid equals
\[
   \beta\overline{\delta}_r^{-(p-r)}\delta_r^{-r}.
\]
This proves the first assertion.

Now suppose that $p=kr$. Since $p-r=(k-1)r$, we have
\[
\begin{aligned}
   \overline{\delta}_r^{-(p-r)}\delta_r^{-r}
   &=
   \bigl(\overline{\delta}_r^r\bigr)^{-(k-1)}
   \bigl(\delta_r^r\bigr)^{-1}\\
   &=
   \bigl(\Delta_r^2\bigr)^{-(k-1)}
   \bigl(\Delta_r^2\bigr)^{-1}\\
   &=
   \bigl(\Delta_r^2\bigr)^{-k}\\
   &=
   \delta_r^{-kr}.
\end{aligned}
\]
The second assertion follows.
\end{proof}

\begin{corollary}
\label{cor:cancellation}
Let $r\geq2$ be an integer and let $B\in\br_r$. For every integer
$k\geq2$,
\[
   \widehat{B\delta_r^{kr}\delta_{kr}^{-r}}
   \cong
   \widehat{B}.
\]
\end{corollary}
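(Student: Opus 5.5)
The plan is to apply Corollary~\ref{cor:negative-full-twist} with $\beta=B\delta_r^{kr}$, which lies in $\br_r$, and then use that $\delta_r^{kr}$ and $\delta_r^{-kr}$ cancel.

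First, since $B\in\br_r$ and $\delta_r^{kr}\in\br_r$, their product $\beta=B\delta_r^{kr}$ is again an $r$-braid. We regard it as an element of $\br_{kr}$ by adding $kr-r$ trivial strands on the right. Because $k\geq2$, we have $p=kr>r$, so the hypotheses of the second assertion of Corollary~\ref{cor:negative-full-twist} hold. That assertion gives
\[
   \widehat{B\delta_r^{kr}\delta_{kr}^{-r}}
   \cong
   \widehat{B\delta_r^{kr}\delta_r^{-kr}}.
\]

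Second, inside $\br_r$ the product $\delta_r^{kr}\delta_r^{-kr}$ is trivial. Hence the braid on the right equals $B$, and the closure is $\widehat B$. This is the isotopy claimed in the statement.

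The only point that needs care is the stabilisation convention. The left-hand closure lives on $kr$ strands, while the right-hand closure, as produced in the proof of Proposition~\ref{prop:negative-reduction}, lives on $r$ strands. The extra strands are absorbed one at a time by the strand-removal isotopy~\eqref{eq:one-step-negative}; they are not left as split unknots. So the closures agree as links, with no leftover components. I would add a remark that $\widehat B$ here is the closure of $B$ as an $r$-braid. Apart from that, the argument is a direct substitution into the corollary, and I do not expect any genuine obstacle.
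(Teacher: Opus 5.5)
Your proposal is correct and is essentially the paper's proof. The paper likewise applies the second assertion of Corollary~\ref{cor:negative-full-twist} to the $r$-braid $B\delta_r^{kr}$ and then cancels $\delta_r^{kr}\delta_r^{-kr}$ in $\br_r$; your remark that $\widehat{B}$ is the closure on $r$ strands, with no leftover split components, is accurate and simply makes explicit what the paper leaves implicit.
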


\begin{proof}
Apply the second assertion of
Corollary~\ref{cor:negative-full-twist} to the $r$-braid
$B\delta_r^{kr}$. We obtain
\begin{align*}
   \widehat{B\delta_r^{kr}\delta_{kr}^{-r}}
   &\cong
   \widehat{B\delta_r^{kr}\delta_r^{-kr}}\\
   &=
   \widehat{B}. \qedhere
\end{align*}
\end{proof}

Corollary~\ref{cor:cancellation} will be used in the next section
to construct infinitely many generalised $T$-link presentations
of a fixed link.

\section{Infinitely many generalised \texorpdfstring{$T$}{T}-link presentations}
\label{sec:infinite-representations}

We now use the cancellation isotopy from the preceding section to
construct explicit infinite families of generalised $T$-link
presentations of the same link. We first describe one application of
the construction and then iterate it to prove
Theorem~\ref{thm:main-intro}.

\begin{theorem}
\label{thm:explicit-family}
Let
\[
   \mathcal P=((r_1,s_1),\ldots,(r_\ell,s_\ell),d)
\]
be any generalised $T$-link presentation, and let $L$ be the link
represented by $\mathcal P$. Put
\[
   r=r_\ell,
   \qquad
   s=s_\ell,
\]
and define
\[
   k_0=\max\left\{
      2,\left\lfloor\frac{-s}{r}\right\rfloor+1
   \right\}.
\]
For every integer $k\geq k_0$, define
\[
\begin{aligned}
   \mathcal P_k=\bigl(
      &(r_1,s_1),\ldots,(r_{\ell-1},s_{\ell-1}),\\
      &(r,s+kr),(kr,-r),d
   \bigr),
\end{aligned}
\]
where the initial list is empty when $\ell=1$.

Then the following statements hold.
\begin{enumerate}
\item
Every $\mathcal P_k$ is a generalised $T$-link presentation
representing $L$.

\item
The presentations $\mathcal P_k$, with $k\geq k_0$, are pairwise
distinct.
\end{enumerate}
\end{theorem}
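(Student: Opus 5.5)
The plan splits into three parts: verifying that $\mathcal P_k$ is a valid parameter string, identifying its link with $L$, and checking distinctness.

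\emph{Validity of $\mathcal P_k$.} The strand parameters of $\mathcal P_k$ are $r_1<\cdots<r_{\ell-1}<r<kr$. These are strictly increasing since $k\geq2$ and $r\geq2$. The final strand parameter is $kr$. All non-final exponents must be positive. The exponents $s_1,\ldots,s_{\ell-1}$ are positive by hypothesis. For the new exponent $s+kr$, the condition $k\geq\lfloor -s/r\rfloor+1>-s/r$ gives $kr>-s$, so $s+kr\geq1$. The final exponent is $-r$, and $r\geq2$ gives $-r\notin\{-1,0,1\}$. The offset $d$ is unchanged. Hence $\mathcal P_k$ satisfies Definition~\ref{def:generalised-t-link}.

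\emph{Same link.} Because the first $d$ strands are trivial in every defining braid, it suffices to treat $d=0$; the $d$ split unknots are then added back on both sides. Put $B=\delta_{r_1}^{s_1}\cdots\delta_{r_{\ell-1}}^{s_{\ell-1}}\delta_r^{s}\in\br_r$. This is a braid on $r$ strands, since each $r_i<r$. The braid of $\mathcal P_k$ is
\[
   \delta_{r_1}^{s_1}\cdots\delta_{r_{\ell-1}}^{s_{\ell-1}}\delta_r^{s+kr}\delta_{kr}^{-r}=B\delta_r^{kr}\delta_{kr}^{-r}.
\]
Here the powers of $\delta_r$ simply combine. Corollary~\ref{cor:cancellation}, applied with $k\geq2$, gives that its closure is isotopic to $\widehat B$, which is $L$. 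If we want to state the reduction to $d=0$ at the braid level rather than through the split-union remark, we would note that the whole argument is conjugate under the shift $\sigma_i\mapsto\sigma_{i+d}$, and the extra $d$ strands are untouched.

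\emph{Distinctness.} For distinct $k\neq k'$, the final strand parameters $kr$ and $k'r$ differ, because $r\neq0$. So the parameter strings differ.

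The only real content is the cancellation identity, which is already available from Corollary~\ref{cor:cancellation}. The main point needing care is the bound on $k_0$ guaranteeing $s+kr>0$, which is the step where the floor must be handled correctly. Since $\lfloor x\rfloor+1>x$ for every real $x$, taking $x=-s/r$ settles it.
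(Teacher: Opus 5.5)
Your proposal is correct and follows essentially the same approach as the paper. You check validity using $\lfloor -s/r\rfloor+1>-s/r$, remove the $d$ trivial strands, rewrite the braid as $B\delta_r^{kr}\delta_{kr}^{-r}$ and apply Corollary~\ref{cor:cancellation}, then get distinctness from the final strand parameter $kr$, exactly as the paper does.
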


\begin{proof}
By the definition of $k_0$, every integer $k\geq k_0$ satisfies
$k\geq2$ and
\[
   k>\frac{-s}{r}.
\]
Hence $s+kr>0$, so the new penultimate twisting exponent is positive.
Since $r\geq2$, we also have
\[
   r<kr
   \qquad\text{and}\qquad
   -r\in\mathbb Z\setminus\{-1,0,1\}.
\]
All preceding twisting exponents remain positive, and the strand
parameters remain strictly increasing. Therefore, $\mathcal P_k$
satisfies Definition~\ref{def:generalised-t-link}. This includes
$\ell=1$, when
\[
   \mathcal P_k=((r,s+kr),(kr,-r),d).
\]

We next verify that $\mathcal P_k$ represents $L$. After omitting the
first $d$ trivial strands, the braid defining $\mathcal P$ is
\[
   B=
   \delta_{r_1}^{s_1}\cdots
   \delta_{r_{\ell-1}}^{s_{\ell-1}}
   \delta_r^s
   \in\br_r,
\]
where the product preceding $\delta_r^s$ is empty when $\ell=1$.
Regarding $B$ as a braid on $kr$ strands by adding trivial strands
on the right, the corresponding braid for $\mathcal P_k$ is
\[
\begin{aligned}
   B_k
   &=
   \delta_{r_1}^{s_1}\cdots
   \delta_{r_{\ell-1}}^{s_{\ell-1}}
   \delta_r^{s+kr}\delta_{kr}^{-r}\\
   &=B\delta_r^{kr}\delta_{kr}^{-r}.
\end{aligned}
\]
Corollary~\ref{cor:cancellation} gives
\[
   \widehat{B_k}
   =\widehat{B\delta_r^{kr}\delta_{kr}^{-r}}
   \cong\widehat{B}.
\]
Restoring the first $d$ trivial strands adds $d$ split unknotted
components to each closure. Taking the split union with these
components preserves the preceding isotopy. Thus $\mathcal P_k$
represents $L$ for every $k\geq k_0$, proving the first assertion.

Finally, the final strand parameter of $\mathcal P_k$ is $kr$.
Since $r>0$, different values of $k$ give different parameter
strings. This proves the second assertion.
\end{proof}

Theorem~\ref{thm:explicit-family} gives the case $n=1$ of
Theorem~\ref{thm:main-intro}. We now show that its iteration yields
the formula stated in the introduction for every $n\geq1$.

\begin{proof}[Proof of Theorem~\ref{thm:main-intro}]
Fix an indexing sequence
\[
   \mathbf{k}=(k_1,\ldots,k_n)
\]
satisfying the conditions of the theorem, and write
\[
   \mathbf{k}^{(m)}=(k_1,\ldots,k_m),
   \qquad 1\leq m\leq n,
\]
for its initial subsequence of length $m$. Recall that
\[
   R_0=r_\ell,
   \qquad
   s=s_\ell,
   \qquad
   R_i=k_iR_{i-1}
   \quad(1\leq i\leq n).
\]

We first prove by induction on $m$ that applying the construction
successively with parameters $k_1,\ldots,k_m$ produces the stated
generalised $T$-link presentation $\mathcal P_{\mathbf{k}^{(m)}}$,
representing $L$ and having final pair
\[
   (R_m,-R_{m-1}).
\]

For $m=1$, since $k_1\geq k_0$,
Theorem~\ref{thm:explicit-family} applies to $\mathcal P$ with
$k=k_1$. The resulting presentation is
\[
\begin{aligned}
   \mathcal P_{\mathbf{k}^{(1)}}=\bigl(
      &(r_1,s_1),\ldots,(r_{\ell-1},s_{\ell-1}),\\
      &(R_0,s+R_1),(R_1,-R_0),d
   \bigr),
\end{aligned}
\]
where $R_1=k_1R_0$. It represents $L$ and has the required final pair.

Now let $2\leq m\leq n$ and suppose that the assertion holds for
$m-1$. The final pair of
$\mathcal P_{\mathbf{k}^{(m-1)}}$ is
\[
   (R_{m-1},-R_{m-2}).
\]
Since $k_{m-1}\geq2$, we have
\[
   \frac{R_{m-2}}{R_{m-1}}
   =\frac{1}{k_{m-1}}\in(0,1).
\]
Consequently, the lower bound required by
Theorem~\ref{thm:explicit-family} at this stage is
\[
   \max\left\{
      2,\left\lfloor\frac{R_{m-2}}{R_{m-1}}\right\rfloor+1
   \right\}=2.
\]
Thus that theorem applies with any $k_m\geq2$ and replaces the final
pair by
\[
   (R_{m-1},-R_{m-2}+k_mR_{m-1}),
   \qquad
   (k_mR_{m-1},-R_{m-1}).
\]
Using $R_m=k_mR_{m-1}$, these become
\[
   (R_{m-1},R_m-R_{m-2}),
   \qquad
   (R_m,-R_{m-1}).
\]
All earlier pairs and the offset $d$ remain unchanged. The resulting
presentation is again a generalised $T$-link presentation of $L$.
This proves the induction step.

Consequently,
\[
\begin{aligned}
   \mathcal P_{\mathbf{k}}=\bigl(
      &(r_1,s_1),\ldots,(r_{\ell-1},s_{\ell-1}),\\
      &(R_0,s+R_1),
      \bigl(R_i,R_{i+1}-R_{i-1}\bigr)_{i=1}^{n-1},\\
      &(R_n,-R_{n-1}),d
   \bigr)
\end{aligned}
\]
is a generalised $T$-link presentation representing $L$. The argument
includes $\ell=1$ and $n=1$, with the conventions for empty lists in
the statement. This proves the first assertion.

We next prove pairwise distinctness. Let $\mathbf{k}$ and
$\mathbf{k}'$ be two distinct indexing sequences. A sequence of
length $n$ produces a presentation with exactly $\ell+n$ twisting
pairs. Thus sequences of different lengths give distinct parameter
strings.

Suppose that the sequences have the same length, and let $j$ be
their first differing index. Then
\[
   R_{j-1}=R_{j-1}',
\]
whereas
\[
   R_j=k_jR_{j-1}
   \neq k_j'R_{j-1}'=R_j'.
\]
These different strand parameters occur in the same position in the
respective parameter strings. Hence
$\mathcal P_{\mathbf{k}}\neq\mathcal P_{\mathbf{k}'}$, proving the
second assertion.

Finally, Theorem~\ref{thm:generalised-t-universality} ensures that
every link in $S^3$ admits a generalised $T$-link presentation.
Starting from any such presentation and varying $k_1\geq k_0$ with
$n=1$ already gives infinitely many pairwise distinct generalised
$T$-link presentations of that link.
\end{proof}

\begin{remark}
\label{rem:markov}
Repeated Markov stabilisation gives infinitely many braid
representatives of a fixed link. Theorem~\ref{thm:main-intro}
establishes multiplicity within the prescribed generalised $T$-link
form. The construction gives explicit transformations of the
parameter strings while retaining all the restrictions in
Definition~\ref{def:generalised-t-link}.
\end{remark}

\bibliographystyle{amsplain}
\bibliography{References}

@article {BirmanKofman2009,
    AUTHOR = {Birman, Joan S. and Kofman, Ilya},
     TITLE = {A new twist on {L}orenz links},
   JOURNAL = {J. Topol.},
  FJOURNAL = {Journal of Topology},
    VOLUME = {2},
      YEAR = {2009},
    NUMBER = {2},
     PAGES = {227--248},
      ISSN = {1753-8416},
       DOI = {10.1112/jtopol/jtp007},
        MR = {2529294}
}

@misc {dePaiva2024,
    AUTHOR = {de Paiva, Thiago},
     TITLE = {{L}orenz links, {$T$}-links, minimal braids, positive braids
              with a full twist, and geometric types},
      YEAR = {2024},
      NOTE = {arXiv:2409.14824, version 2 (2026)},
       DOI = {10.48550/arXiv.2409.14824},
    EPRINT = {2409.14824},
ARCHIVEPREFIX = {arXiv},
PRIMARYCLASS = {math.GT}
}

@misc {dePaivaHuiRodriguez2024,
    AUTHOR = {de Paiva, Thiago and Hui, Connie On Yu and
              Rodr{\'i}guez Migueles, Jos{\'e} Andr{\'e}s},
     TITLE = {Generalised {$T$}-links represent all links in the
              {$3$}-sphere},
      YEAR = {2026},
      NOTE = {arXiv:2410.04391, version 3 (2026)},
       DOI = {10.48550/arXiv.2410.04391},
    EPRINT = {2410.04391},
ARCHIVEPREFIX = {arXiv},
PRIMARYCLASS = {math.GT}
}

@article {Lorenzknots,
    AUTHOR = {de Paiva, Thiago},
     TITLE = {Satellite knots that cannot be represented by positive braids with full twists},
   JOURNAL = {New York J. Math.},
  FJOURNAL = {New York Journal of Mathematics},
      YEAR = {2025},
    VOLUME = {31},
     PAGES = {1690--1701},
      ISSN = {1076-9803},
   MRCLASS = {57K10 (57K35 57M25)},
       URL = {https://nyjm.albany.edu/j/2025/31-67.html}
}

\end{document}